\documentclass[12pt]{article}
\usepackage{latexsym,color,amsmath,amsthm,amssymb,amscd,amsfonts}
\usepackage{scalefnt}
\usepackage[font=small,labelfont=bf]{caption}
\usepackage{float}
\usepackage{graphicx}
\usepackage{amsmath}
\usepackage{amsmath}
\usepackage{relsize}
\usepackage{mathrsfs}
\usepackage{upgreek}
\usepackage{sectsty}
\usepackage{bbm}

\setlength{\textwidth}{6.0in} \setlength{\evensidemargin}{0.25in}
\setlength{\oddsidemargin}{0.25in} \setlength{\textheight}{9.0in}
\setlength{\topmargin}{-0.5in} \setlength{\parskip}{2mm}
\setlength{\baselineskip}{1.7\baselineskip}

\newtheoremstyle{nonum}{}{}{\itshape}{}{\bfseries}{.}{ }{\thmnote{#3}}

\newtheorem{thm}{Theorem}[section]
\newtheorem*{thm*}{Theorem}

\newtheorem{lem}[thm]{Lemma}
\newtheorem*{lem*}{Lemma}
\newtheorem{rem}[thm]{Remark}
\newtheorem*{rem*}{Remark}
\newtheorem{prop}[thm]{Proposition}
\newtheorem*{prop*}{Proposition}

\newtheorem*{conj*}{Conjecture}

\newtheorem*{definition*}{Definition}

\newtheorem*{fact*}{Fact}

\newtheorem*{rems*}{Remarks}

\theoremstyle{nonum}


\newcommand{\R}{\mathbb R}

\newcommand{\N}{\mathbb N}










\def\vol{{\rm Vol}}
\def\grad{{\nabla}}

\def\eps{{\varepsilon}}

\def\conv{{\rm conv}}

\newcommand{\capsilon}[2]{C(#1,#2)}

\begin{document}

\title{On the Many Faces of Easily Covered Polytopes}
\author{Dan I. Florentin, Tomer Milo}
\date{}
\maketitle

\begin{abstract}
Assume that $rB_{2}^{n} \subset P$ for some polytope $P \subset \R^n$, where $r \in (\frac{1}{2},1]$. Denote by $\mathcal{F}$ the set of facets of $P$, and by $N=N(P,B_2^n)$ the covering number of $P$ by the Euclidean unit ball $B_2^n$. We prove that if $\log N \le\frac{n}{8}$, then
\[
|\mathcal{F}| \ge \left(
\frac{1}{ 2\left(1 - r \sqrt{1-\frac{4\log N}{n}}\right) }
\right)^{\frac{n-1}{2}}.
\]
\end{abstract}

\section{Introduction and the main result}
A polytope $P \subset \R^n$ is the convex hull of finitely many points. The relationship between the geometry and combinatorics of polytopes in high dimensions has been studied extensively in the literature. In \cite{MS}, Milman and Szarek studied the connection between the number of faces of a polytope $P\subset B_2^n$, its inradius, and its covering number by $B_{2}^{n}$, namely
\[N(P,B_2^n)= \min\left\{N: \exists x_{1},...,x_{N} \in \R^n, P \subset \bigcup_{i=1}^{N} \left( x_{i} + B_{2}^{n} \right)\right\}
. \]
They formulated the following ``geometric lemma''.
\begin{conj*}[Milman-Szarek]\label{conj-MS}
Let $r\in(0,1]$. Then there exists $c=c(r)$ such that for every $n\in\N$ and for every polytope $P=\conv(V)\subset\R^n$ containing $rB_2^n$, then
\[
N(P,B_2^n) < e^{cn} \implies |V| > e^{cn}.
\]
In particular, the covering number $N$ and number of vertices $|V|$ cannot both be sub exponential (in the dimension).
\end{conj*}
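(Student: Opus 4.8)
The plan is to prove the conjecture in the range $r\in(1/2,1]$, where the ideas behind the main theorem are available, via a dichotomy on the circumradius $R$ of $P$: when $R$ is small a normal-cone count already yields exponentially many vertices; when $R$ is very large a one-dimensional packing along a long spike of $P$ already yields an exponentially large covering number; and the delicate part is the intermediate window, which is exactly where the hypothesis $r>1/2$ matters.

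The first tool is a count via normal cones. Since $rB_2^n\subseteq P$, the support function satisfies $h_P\ge r$ on $S^{n-1}$, and the normal cones $N(v)\cap S^{n-1}$ ($v\in V$) cover $S^{n-1}$; since each of these sets lies in the spherical cap $\{\theta:\langle v/|v|,\theta\rangle\ge r/|v|\}$, whose normalized measure is at most $e^{-nr^2/(2|v|^2)}$, summing over $v\in V$ gives $1\le|V|\,e^{-nr^2/(2R^2)}$, that is,
\[
|V|\ \ge\ e^{nr^2/(2R^2)}.
\]
Hence if $R\le r/\sqrt{2c}$ then $|V|\ge e^{cn}$; since $r>1/2$ this settles every $P$ whose circumradius is bounded by a fixed constant.

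The second tool is a packing bound. A vertex $v^{*}$ with $|v^{*}|=R$ produces the spike $\conv(rB_2^n\cup\{v^{*}\})\subseteq P$, which contains the points $3k\,v^{*}/|v^{*}|$ for $0\le k\le R/3$; these are pairwise at distance at least $3$, so no Euclidean unit ball meets two of them and $N(P,B_2^n)\ge R/3$. Hence if $R\ge 3e^{cn}$ then $N(P,B_2^n)\ge e^{cn}$. The assumed bound $N(P,B_2^n)<e^{cn}$ therefore forces $R$ into the intermediate window $r/\sqrt{2c}<R<3e^{cn}$, where a single spike is too short and the circumradius too large for either tool, taken alone, to be effective.

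The crux is this intermediate regime. The natural attack is to combine the two mechanisms: the cap estimate shows that the vertices of $P$ of norm at least a fixed constant have normal cones covering all but an exponentially small fraction of $S^{n-1}$, so $P$ has spikes in a rich family of directions; one then wants to pack simultaneously along a near-orthogonal subfamily of these spikes, so that the packing points across different spikes stay $2$-separated, or else to transfer the covering estimate to the polar body $P^{\circ}$ (whose facets correspond to the vertices of $P$) through the duality of covering numbers. Turning either of these into a genuine bound is where $r>1/2$ is used, and the $r^{n-1}$-type losses incurred in the spike/cap bookkeeping are precisely what is not known to be controllable for general $r\in(0,1]$ — which is why the statement remains open in that generality.
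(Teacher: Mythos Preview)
The statement you are attempting to prove is the Milman--Szarek \emph{conjecture}; the paper does not prove it and explicitly says it ``is still open.'' The paper's contribution is an analogue with facets in place of vertices (Theorem~\ref{thm: main theorem}), and the authors stress that they ``are not aware of some duality argument which allows us to replace facets by vertices.'' So there is no paper proof to compare against.

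Your proposal is not a proof either, and you essentially acknowledge this. Your two tools are correct and classical: the normal-cone covering gives $|V|\ge e^{nr^{2}/(2R^{2})}$, and the packing along a spike gives $N(P,B_2^n)\ge R/3$. But these only dispose of the cases $R=O(1)$ and $R\ge 3e^{cn}$, respectively. The ``intermediate window'' you identify is the entire range from bounded circumradius up to exponential circumradius, which is the generic situation and the whole content of the conjecture; your first tool already becomes useless once $R$ grows, say, like $\sqrt{n}$. The final paragraph sketches heuristics (packing along many near-orthogonal spikes, or passing to $P^{\circ}$ via entropy duality) but does not carry any of them out, and indeed the duality route would at best recover the facet result the paper actually proves, not the vertex statement. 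In short, the proposal correctly isolates where the difficulty lies but does not overcome it; nothing here establishes the conjecture even for $r\in(1/2,1]$.
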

Milman and Szarek showed that this conjecture implies the ``duality conjecture'' regarding covering numbers, and while the latter has since been proved (see \cite{AMS}), the former is still open.
In this note we prove an analogue of the conjectured geometric lemma, where the set of vertices $V$ is replaced by the set of facets $\mathcal{F}$, i.e. the set of $(n-1)$-dimensional faces of $P$. We are not aware of some duality argument which allows us to replace facets by vertices, towards a proof of Milman and Szarek's geometric lemma.

\begin{thm}\label{thm: main theorem}
Let $n\ge 3$ and let $P\subset \R^n$ be a polytope containing $rB_2^n$, for some $\frac{3\sqrt{3}}{\sqrt{n}}\le r \le 1$. Then
\[
3\le N = N(P,B_2^n) < e^{\frac{n}8} \implies
|\mathcal{F}| >
\left(
\frac{1}{2\left(1-r\sqrt{1-\frac{4\log N}{n}}\right)}
\right)^{\frac{n-1}{2}},
\]
where $\mathcal{F}$ is the set of facets of $P$.
\end{thm}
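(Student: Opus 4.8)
The plan is to feed the covering hypothesis into the argument only through the volume bound $\vol(P)\le N\kappa_n$, where $\kappa_n:=\vol(B_2^n)=\tfrac1n\sigma(S^{n-1})$ and $\sigma$ denotes surface measure on $S^{n-1}$. Since $0\in rB_2^n\subset\operatorname{int}(P)$, the radial function $\rho_P$ of $P$ is well defined and $\vol(P)=\tfrac1n\int_{S^{n-1}}\rho_P(\theta)^n\,d\sigma(\theta)$; combining, $\frac{1}{\sigma(S^{n-1})}\int_{S^{n-1}}\rho_P^n\,d\sigma\le N$. Markov's inequality then shows the ``good set'' $G:=\{\theta\in S^{n-1}:\rho_P(\theta)^n<2N\}$ has $\sigma(G)\ge\tfrac12\sigma(S^{n-1})$, and since $N\ge 2$ one verifies, using $-\log(1-x)\ge x$, that $(2N)^{1/n}\le\varrho^{-1}$, where $\varrho:=\sqrt{1-\tfrac{4\log N}{n}}$; hence $\rho_P(\theta)<1/\varrho$ for every $\theta\in G$.

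The next step turns ``small radial function'' into ``facet normal nearly aligned with $\theta$''. For $\theta\in G$ the point $\rho_P(\theta)\theta$ lies on $\partial P$, hence on some facet $F\in\mathcal F$; write $u_F$ for its outer unit normal and $h_F$ for the distance from the origin to $\operatorname{aff}(F)$, so that $h_F\ge r$ (which is exactly the inclusion $rB_2^n\subset P$). Since $\rho_P(\theta)\theta\in F$ we get
\[
\langle\theta,u_F\rangle=\frac{h_F}{\rho_P(\theta)}>r\varrho,
\]
i.e.\ $\theta$ belongs to the spherical cap $C_F:=\{\psi\in S^{n-1}:\langle\psi,u_F\rangle\ge r\varrho\}$. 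Thus $G\subset\bigcup_{F\in\mathcal F}C_F$; all these caps have height $r\varrho$ and therefore the same measure $\sigma(C)$, so subadditivity gives $|\mathcal F|\ge\sigma(G)/\sigma(C)\ge\tfrac12\sigma(S^{n-1})/\sigma(C)$.

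It remains to bound the cap measure. Writing $\frac{\sigma(C)}{\sigma(S^{n-1})}=\frac{\int_{r\varrho}^1(1-s^2)^{(n-3)/2}\,ds}{\int_{-1}^1(1-s^2)^{(n-3)/2}\,ds}$, I would estimate the numerator by $1-s^2\le 2(1-s)$, giving $\int_{r\varrho}^1(1-s^2)^{(n-3)/2}\,ds\le\frac{(2(1-r\varrho))^{(n-1)/2}}{n-1}$, and bound the denominator from below by $\tfrac{2}{n-1}$ (equivalently $\sqrt\pi\,\Gamma(\tfrac{n+1}2)\ge\Gamma(\tfrac n2)$, which holds for $n\ge 3$ by log-convexity of $\Gamma$). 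This yields $\frac{\sigma(C)}{\sigma(S^{n-1})}\le\tfrac12\bigl(2(1-r\varrho)\bigr)^{(n-1)/2}$, hence
\[
|\mathcal F|\ \ge\ \frac{\sigma(S^{n-1})}{2\,\sigma(C)}\ >\ \left(\frac{1}{2(1-r\varrho)}\right)^{\frac{n-1}{2}},
\]
with strict inequality because $1-s^2<2(1-s)$ on $(-1,1)$. This is the claim.

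I expect the only genuinely delicate point to be the Markov step. A polytope covered by few unit balls may be long and thin, so that many of its facets ``see'' (in the radial sense from the origin) directions nearly orthogonal to their normals; one therefore cannot hope to control $\rho_P$ uniformly over $S^{n-1}$. The resolution is that one only needs $\rho_P(\theta)<1/\varrho$ on a set of measure $\ge\tfrac12\sigma(S^{n-1})$, and the volume bound delivers exactly that, after which caps of height $r\varrho$ covering such a set already force the stated exponential lower bound on $|\mathcal F|$. In fact the argument above uses the hypotheses sparingly — only $N\ge2$, $\log N<\tfrac n4$, $n\ge3$, $r\le1$ — so the stated conditions $r\ge3\sqrt3/\sqrt n$ and $\log N\le\tfrac n8$ presumably serve to keep the constants and non-vacuousness clean, or would be exploited by a more refined version of the cap estimate.
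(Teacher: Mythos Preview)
Your argument is correct, and it reaches the same conclusion by a genuinely different route than the paper.

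The paper exploits the covering itself: it fixes centers $x_1,\dots,x_N$ of a covering, defines the set
\[
A_\eps=\{\theta\in S^{n-1}:\ |\langle\hat x_i,\theta\rangle|\le\eps\ \text{for all }i\},
\]
uses concentration on the sphere to get $\sigma(A_\eps)\ge 1-2Ne^{-n\eps^2/2}$, and then a short cone argument to show that $\theta\in A_\eps$ forces $\rho_P(\theta)\le(1-\eps^2)^{-1/2}$. With $\eps=\sqrt{4\log N/n}$ this yields the ``good set'' on which $\rho_P\le 1/\varrho$, after which the proof proceeds exactly as yours does (facet normals form a net for the good set, then a cap estimate).

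You bypass both the concentration inequality and the cone geometry: from the covering you extract only $\vol(P)\le N\kappa_n$, then Markov on $\rho_P^n$ produces a set of normalized measure $\ge\tfrac12$ on which $\rho_P^n<2N$, and the elementary inequality $(2N)^{1/n}\le(1-4\log N/n)^{-1/2}$ for $N\ge2$ matches the paper's radial bound. This is cleaner and shows that the theorem really only needs the volume consequence of the covering hypothesis; it also explains your observation that the full strength of the assumptions $r\ge 3\sqrt3/\sqrt n$ and $\log N<n/8$ is not used. One small remark: your claim that $\int_{-1}^1(1-s^2)^{(n-3)/2}\,ds\ge\tfrac{2}{n-1}$ follows more directly from $1-s^2\ge 1-s$ on $[0,1]$ (giving in fact $\ge\tfrac{4}{n-1}$) than from the $\Gamma$--inequality you cite via log-convexity, though the inequality itself is true.
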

Note that for the maximal value $r=1$, the inequality $1-\sqrt{1-\eps} \le \eps$ yields
\[ |\mathcal{F}| \ge
\left( \frac{n}{8\log N}\right)^{\frac{n-1}{2}}.
\]
In fact, when $r$ approaches $1$ and $\log N = o(n)$, we get a super-exponential lower bound (see Remark \ref{rem:super-exp} below).
We would like to highlight the similarity between our result and the following well known fact, which states that if $rB_{2}^{n} \subset P \subset B_{2}^{n}$ then the set of facets is large. Throughout this text, $\sigma$ denotes the unique rotation invariant probability measure on $S^{n-1}$. The cap around $u$ at height $h$ (for $\theta \in S^{n-1}, h\in (0,1)$) is defined to be
\[
\capsilon{\theta}{h}= \{u \in S^{n-1}: \langle u, \theta \rangle \ge h\}.
\]
\begin{prop}\label{prop: many faces of polytope close to ball}
Let $n\ge 3$, let $P \subset \R^n$ be a polytope, and denote by  $\mathcal{F}$ and $V$ the sets of facets and vertices of $P$, respectively. 
Assume that $rB_{2}^{n} \subset P \subset B_{2}^{n}$ for some $\frac1{\sqrt{n-1}} \le r \le 1$. Then
\[ \min\{ |\mathcal{F}|, |V| \} \ge
\left(\frac{1}{2(1-r)}\right)^{\frac{n-1}{2}}. \] 
\end{prop}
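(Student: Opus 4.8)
The plan is to estimate both $|\mathcal{F}|$ and $|V|$ from below by the same covering-type argument, exploiting the sandwiching $rB_2^n \subset P \subset B_2^n$. For the facets: each facet $F$ of $P$ lies in a supporting hyperplane $\{x : \langle x, u_F\rangle = h_F\}$ with $u_F \in S^{n-1}$ and, since $rB_2^n \subset P \subset B_2^n$, we have $r \le h_F \le 1$. Radially projecting $\partial P$ onto $S^{n-1}$, the facet $F$ maps onto a subset of the spherical cap $C(u_F, r)$ (a point $x \in \partial P$ with $\langle x/|x|, u_F\rangle = t$ satisfies $\langle x, u_F\rangle \le 1$, hence $|x| \ge h_F/t \ge r/t$, and combined with $|x|\le 1$ this forces $t \ge r$). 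Since these radial images cover all of $S^{n-1}$, we get $|\mathcal{F}| \cdot \sigma(C(u,r)) \ge 1$, i.e. $|\mathcal{F}| \ge 1/\sigma(C(u,r))$.

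The second ingredient is a clean upper bound for the measure of a cap: I claim $\sigma(C(u,h)) \le (1-h^2)^{(n-1)/2} \cdot \tfrac12$ for $h$ not too small, or more crudely $\sigma(C(u,h)) \le (2(1-h))^{(n-1)/2}$ when $h \ge \tfrac1{\sqrt{n-1}}$-type thresholds hold; this is exactly the form that produces the stated bound with $h = r$. Concretely, one writes $\sigma(C(u,h)) = \frac{\int_h^1 (1-t^2)^{(n-3)/2}\,dt}{\int_{-1}^1 (1-t^2)^{(n-3)/2}\,dt}$ and bounds the numerator by replacing $(1-t^2)^{(n-3)/2}$ by its value-and-length estimate, while bounding the denominator from below by a constant (here the hypothesis $r \ge 1/\sqrt{n-1}$ is what makes the elementary estimate $\sigma(C(u,r)) \le (2(1-r))^{(n-1)/2}$ valid). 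Plugging this into $|\mathcal{F}| \ge 1/\sigma(C(u,r))$ gives the claimed inequality.

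For the vertices the argument is dual: each vertex $v$ of $P$ has $|v| \ge r$ (since... actually $v$ could be close to the origin — no: $v \in \partial P$ and $rB_2^n \subset P$ gives $|v|\ge r$ only if $v \notin rB_2^n$, which holds as $v$ is extreme) and $|v| \le 1$; the normal cone of $P$ at $v$ intersected with $S^{n-1}$ is a spherical region, and as $v$ ranges over $V$ these normal-cone pieces cover $S^{n-1}$. The constraint $rB_2^n \subset P \subset B_2^n$ forces each such normal region to sit inside a cap of height $r$ as well, so the same counting gives $|V| \ge 1/\sigma(C(u,r)) \ge (2(1-r))^{-(n-1)/2}$. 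Taking the minimum yields the proposition.

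The main obstacle I anticipate is the cap estimate: getting the constant exactly right so that $\sigma(C(u,r)) \le (2(1-r))^{(n-1)/2}$ rather than some weaker $(c(1-r))^{(n-1)/2}$, and identifying precisely the range of $r$ (namely $r \ge 1/\sqrt{n-1}$) where this clean bound survives. The covering/counting steps for facets and vertices are routine once one sets up the radial projection and normal-cone decompositions correctly; the geometric claim that facet-images and normal-cone-images each fit inside a cap of height exactly $r$ is where the two-sided inclusion $rB_2^n \subset P \subset B_2^n$ must be used carefully on both sides.
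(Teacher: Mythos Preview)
Your proposal is correct and follows essentially the same approach as the paper: a union bound over caps of height $r$, combined with the cap estimate $\sigma(C(u,r)) \le \frac{(2(1-r))^{(n-1)/2}}{r\sqrt{n-1}}$ (this is where the hypothesis $r\ge 1/\sqrt{n-1}$ is used to absorb the factor $r\sqrt{n-1}\ge 1$). The only organizational difference is that the paper handles the vertex case first by the one-line observation that $rB_2^n\subset\conv(V)$ forces $\max_{v\in V}\langle\theta,v\rangle\ge r$ for every $\theta\in S^{n-1}$ (no normal cones needed), and then obtains the facet bound by applying the vertex bound to the polar polytope $Q=rP^\circ$, whereas you argue both cases directly via radial projection and normal cones --- these are exactly the two sides of that duality written out by hand.
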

For the sake of completeness we include a proof of this classical result, based on estimating the measure of a spherical cap of height $r$, for $r \in (\frac{1}{2},1)$ (and a union bound). We prove the estimate itself in Lemma \ref{asymptotic measure of a small cap} below.
\begin{proof} Since $rB_{2}^{n} \subset \conv(V)$, $V$ must intersect every $r$-cap of the unit sphere $S^{n-1}$, that is,
for every $\theta \in S^{n-1}$ there exists $v\in V$ with $\langle \theta, v \rangle \ge r$, i.e. $\theta \in \capsilon{v}{r}$. Thus
\[
S^{n-1} = \bigcup_{v \in V} \capsilon{v}{r}.
\]
Applying a union bound, we obtain
\[
1 = \sigma(S^{n-1}) =
\sigma \left( \bigcup_{v \in V}  \capsilon{v}{r} \right) \le
|V|\cdot \sigma\left( \capsilon{v}{r} \right).
\]
Using Lemma \ref{asymptotic measure of a small cap} to estimate the $\sigma$-measure of a cap, we get
\[
|V| \ge \frac{1}{\sigma\left( \capsilon{v}{r} \right)} \ge
\frac{r\sqrt{n-1}}{\left( 2(1-r) \right)^{\frac{n-1}{2}}} \ge
\left( \frac{1}{2(1-r)} \right)^{\frac{n-1}{2}}.
\]

\noindent This proves Proposition \ref{prop: many faces of polytope close to ball} for the vertices; the same bound applies for the number of facets $\mathcal{F}$ by duality, i.e. by considering $Q=r P^{\circ}$. 
\end{proof}

\noindent \textbf{Notation.} Throughout the text, we will use the following notations. For $n \in \mathbb{N}$, by a convex body $K \subset \mathbb{R}^{n}$ we mean a compact convex set with non-empty interior. The convex hull of a set $V\subset \R^n$ is denoted by $\conv(V)$, and we shall usually consider a polytope $P=\conv(V)$, i.e. the convex hull of a finite set $V$ (the vertices of $P$). The set of facets of $P$ (i.e. the set of $(n-1)$-dimensional faces of $P$) will be denoted by $\mathcal{F}$. The Euclidean ball in $\R^n$ (centered at $0$) and its boundary, are denoted by $B_{2}^{n}$ and $S^{n-1}$ respectively. The Euclidean norm of a vector $x\in\R^n$ is denoted by $|x|$.
If $K\subset\R^n$ is a convex body containing $0$ in its interior, the gauge function of $K$ may be defined (on $\R^n$) as follows.
\[
\|x\|_{K} = \inf\{ t \ge 0 \,:\, x \in tK \}.
\]
If $K$ is centrally symmetric, i.e. $K =- K$, then the gauge function $\|\cdot\|_{K}$ is a norm on $\mathbb{R}^{n}$, and is strictly positive on the sphere $S^{n-1}$. Its reciprocal is called the radial function of $K$, and is defined for $\theta \in S^{n-1}$ as follows.
\[
\rho_{K}(\theta) = \sup\left\{t \ge 0 \,:\, t\theta \in K\right\} = \frac{1}{\|\theta\|_{K}},
\]
so that $K\cap \R^+\theta = [0,\rho_K(\theta)\theta]$. The support function of $K$ is defined for $\theta \in S^{n-1}$ by
\[
h_{K}(\theta) = \sup\left\{\langle x,\theta \rangle  \,:\, x \in K\right\}.
\]
Let $K,T \subset \R^n$ be convex bodies. The covering number of $K$ by $T$, denoted $N(K,T)$, is defined to be the number of translates of $T$ required to cover $K$ i.e.
\[N(K,T) = \min \left\{ N \,:\, \exists x_{1},...,x_{N} \in \mathbb{R}^{n}: K \subset \bigcup_{i=1}^{N}(x_{i} + T) \right\}. \]
In this note we only consider covering by translates of the Euclidean ball i.e. $T = B_{2}^{n}$.

\noindent {\bf Acknowledgments:} The second named author was partially supported by the ERC under the European Union’s Horizon 2020 research and innovation programme (grant agreement no. 770127), and by the Binational Science Foundation (grant no. 2020329).

\section{Proof of Theorem \ref{thm: main theorem}}
Our starting point is the following well known estimate of the measure of $\eps$-caps in $S^{n-1}$, namely sets of the form $\capsilon{u}{\eps} = \{\theta \in S^{n-1}: \langle u, \theta \rangle \ge \eps\}$.
\begin{prop}\label{large caps are small}
    Let $u \in S^{n-1}$. For every $\eps \in (0,1)$ we have that
\begin{equation*}
    \sigma(\capsilon{u}{\eps}) \leq e^{-\frac{1}{2}n\eps^{2}}.
\end{equation*}
\end{prop}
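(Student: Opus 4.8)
The plan is to reduce the statement to an elementary one-dimensional calculus estimate. By rotation invariance of $\sigma$ we may assume $u=e_1$, and, writing a point of $S^{n-1}$ as $(t,\sqrt{1-t^2}\,\omega)$ with $t\in[-1,1]$ and $\omega\in S^{n-2}$, the surface measure disintegrates to give
\[
\sigma(\capsilon{u}{\eps})=\frac{\int_\eps^1(1-t^2)^{\frac{n-3}{2}}\,dt}{\int_{-1}^1(1-t^2)^{\frac{n-3}{2}}\,dt}.
\]
Everything then comes down to estimating this ratio of Beta-type integrals.

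For the main step I would first establish the cleaner inequality $\sigma(\capsilon{u}{\eps})\le\frac12(1-\eps^2)^{\frac{n-1}{2}}$. Set $F(c):=\sigma(\capsilon{u}{c})$ for $c\in[0,1)$; by symmetry of the density, $F(0)=\frac12$, and differentiating the formula above gives
\[
\frac{F'(c)}{F(c)}=\frac{-(1-c^2)^{\frac{n-3}{2}}}{\int_c^1(1-t^2)^{\frac{n-3}{2}}\,dt}.
\]
Since $t/c\ge 1$ on $[c,1]$, we have $\int_c^1(1-t^2)^{\frac{n-3}{2}}\,dt\le\frac1c\int_c^1 t(1-t^2)^{\frac{n-3}{2}}\,dt=\frac{(1-c^2)^{\frac{n-1}{2}}}{c(n-1)}$, and substituting this bound yields $F'(c)/F(c)\le-\frac{(n-1)c}{1-c^2}$. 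Integrating from $0$ to $\eps$ gives $\log F(\eps)\le\log\frac12+\frac{n-1}{2}\log(1-\eps^2)$, i.e. $F(\eps)\le\frac12(1-\eps^2)^{\frac{n-1}{2}}$. (The same bound can be obtained directly from the displayed ratio by the substitution $t^2=\eps^2+(1-\eps^2)v$, which factors out $(1-\eps^2)^{\frac{n-1}{2}}$ and leaves a ratio of integrals that is $\le 1$ because $\eps^2+(1-\eps^2)v\ge v$ for $v\le 1$.)

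To conclude, apply $1-x\le e^{-x}$ to get $(1-\eps^2)^{\frac{n-1}{2}}\le e^{-\frac{(n-1)\eps^2}{2}}$, and observe that for $\eps\in(0,1)$ one has $\frac12 e^{\eps^2/2}\le\frac12 e^{1/2}<1$, so that
\[
\sigma(\capsilon{u}{\eps})\le\tfrac12 e^{-\frac{(n-1)\eps^2}{2}}\le e^{-\frac{n\eps^2}{2}}.
\]
The only mildly delicate point is this last manoeuvre: the geometric argument naturally produces the exponent $n-1$ rather than $n$, and the stated exponent is recovered only by trading the factor $\frac12$ against the constraint $\eps<1$. Beyond that there is no real obstacle; the argument is routine one-variable calculus.
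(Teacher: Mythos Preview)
Your argument is correct. Note that the paper itself does not prove this proposition: it simply refers the reader to Remark~3.1.8 of the Artstein-Avidan--Giannopoulos--Milman book. Your self-contained route via the one-dimensional marginal and the logarithmic derivative of the tail $F(c)$ is clean and yields exactly the stated bound; the final step, trading the factor $\tfrac12$ against $e^{\eps^2/2}<2$ to upgrade the exponent from $n-1$ to $n$, is precisely what is needed. (A minor remark on your parenthetical alternative via $t^2=\eps^2+(1-\eps^2)v$: the residual ratio of integrals is in fact bounded by $\tfrac12$, not merely by $1$, which is of course what you want to recover $F(\eps)\le\tfrac12(1-\eps^2)^{(n-1)/2}$.)
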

For the proof of this fact see e.g. \cite[Remark 3.1.8]{AGA}.
Next, we define the set $A_\eps$, which plays a central role in what follows.
Let $\eps \in (0,1)$. For a convex body $K \subset \mathbb{R}^{n}$ with covering number $N = N(K,B_{2}^{n})$, and $\left\{ x_i \right\}_{i=1}^N \subset \mathbb{R}^n$ such that $K \subset \bigcup_{i=1}^{N} (x_{i} + B_{2}^{n})$, we let
\begin{equation}\label{def:A-epsilon}
A_{\eps} = \{\theta \in S^{n-1}: \: |\langle \hat{x_{i}}, \theta \rangle| \leq \varepsilon, \: \forall i\in [N] \}.    
\end{equation}
Here $\hat{x} = \frac{x}{|x|} \in S^{n-1}$ for $x \neq 0$ (if $x_{i}=0$ for some $i$, we ignore it in the definition). Note that $A_{\eps}$ depends on the covering $\{x_i\}_{i=1}^{N}$; in fact $A_{\eps}$ is the set of all points in $S^{n-1}$ that are `$\eps$-orthogonal' to all the normalized centers of the covering. Proposition \ref{large caps are small} implies that $A_{\eps}$ has a large $\sigma$-measure, as long as the size of $N$ is sub exponential in the dimension $n$. This observation is the main idea behind the following proposition.

\begin{prop}\label{prop: radial function is small}
Let $K \subset \mathbb{R}^{n}$ be a convex body with $0 \in {\rm int}(K)$ and covering number $N = N(K,B_{2}^{n})$. Then for every $\eps \in (0,1)$ we have
\begin{equation*}
    \sigma
    \left(
    \left\{
    \theta \in S^{n-1} \,:\,
    \rho_{K}(\theta) \leq \frac{1}{\sqrt{1-\varepsilon^{2}}}
    \right\}
    \right) \geq
    1- 2e^{\log N-\frac{1}{2}n\varepsilon^{2}}.
\end{equation*}
\end{prop}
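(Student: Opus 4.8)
The plan is to fix a covering $K \subset \bigcup_{i=1}^{N}(x_i + B_{2}^{n})$ attaining the covering number $N$, pass to the associated set $A_\eps$ of \eqref{def:A-epsilon}, and establish the two set inclusions
\[
A_\eps \subset \left\{\theta \in S^{n-1}: \rho_{K}(\theta) \le \tfrac{1}{\sqrt{1-\eps^{2}}}\right\},
\qquad
S^{n-1}\setminus A_\eps \subset \bigcup_{i=1}^{N}\left(\capsilon{\hat{x_i}}{\eps}\cup\capsilon{-\hat{x_i}}{\eps}\right).
\]
From the second inclusion, Proposition \ref{large caps are small} and a union bound give
$\sigma(S^{n-1}\setminus A_\eps) \le 2N e^{-\frac12 n\eps^{2}} = 2e^{\log N-\frac12 n\eps^{2}}$, hence $\sigma(A_\eps) \ge 1-2e^{\log N-\frac12 n\eps^{2}}$; together with the first inclusion this is exactly the claimed estimate. (If some $x_i=0$ it is simply omitted from both unions, which only helps.)

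The heart of the matter is the first inclusion. Take $\theta \in A_\eps$ and put $\rho = \rho_{K}(\theta)$; since $K$ is compact, $\rho\theta$ lies in $K$, hence in some ball $x_i + B_{2}^{n}$, so $|\rho\theta - x_i| \le 1$. If $\rho \le 1$ there is nothing to prove, so assume $\rho > 1$ (in particular $x_i\neq 0$) and write $t = |x_i| > 0$. Expanding the squared distance and using that $\theta\in A_\eps$ forces $\langle\theta,\hat{x_i}\rangle \le |\langle\theta,\hat{x_i}\rangle| \le \eps$, we get
\[
\rho^{2} - 2\rho\eps\, t + t^{2} \;\le\; \rho^{2} - 2\rho t\langle\theta,\hat{x_i}\rangle + t^{2} \;=\; |\rho\theta - x_i|^{2} \;\le\; 1 .
\]
Thus the quadratic $t \mapsto t^{2} - 2\rho\eps\, t + (\rho^{2}-1)$, with positive leading coefficient, takes a value $\le 0$ at a real point, so its discriminant is nonnegative: $4\rho^{2}\eps^{2} - 4(\rho^{2}-1) \ge 0$, which rearranges to $\rho^{2}(1-\eps^{2}) \le 1$, i.e. $\rho \le \frac{1}{\sqrt{1-\eps^{2}}}$, as desired.

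I do not expect a genuine obstacle. The only points requiring a little care are the sign bookkeeping in the displayed chain of inequalities (one uses the \emph{upper} bound $\langle\theta,\hat{x_i}\rangle \le \eps$ extracted from $|\langle\theta,\hat{x_i}\rangle| \le \eps$, which is exactly what pins $\rho$ from above), the harmless separation of the trivial case $\rho \le 1$, and the convention for centers $x_i=0$. Everything else is a union bound combined with Proposition \ref{large caps are small}.
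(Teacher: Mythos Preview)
Your proof is correct and follows the same structure as the paper's: define $A_\eps$, bound $\sigma(A_\eps)$ from below via Proposition~\ref{large caps are small} and a union bound, and then show $A_\eps$ is contained in the set where $\rho_K \le (1-\eps^2)^{-1/2}$. The only cosmetic difference is that for the last step the paper argues geometrically (observing that $K \subset (\R^n\setminus G(\theta,\eps)) + B_2^n$ and computing the radial function of this set at $\theta$ by ``basic trigonometry''), whereas you carry out the equivalent computation algebraically via the discriminant of the quadratic in $t=|x_i|$; both arguments encode the same Pythagorean relation.
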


\begin{proof}
Fix some covering of $K$ by $N$ balls, i.e, $K \subset \bigcup_{i=1}^{N} \left( x_{i} + B_{2}^{n} \right)$, and let $A_{\eps}$ be the intersection of $N$ slabs as defined  in \eqref{def:A-epsilon}. We have
\begin{eqnarray*}
\sigma(A_{\eps}) &=& \sigma(\{\theta \in S^{n-1}: \forall i, \, |\langle \hat{x_{i}}, \theta \rangle| \leq \eps \}) =
1 -
\sigma(\{\theta \in S^{n-1}: \exists i,\, |\langle \hat{x_{i}}, \theta \rangle| > \eps \}) \\ \\
&\ge& 1 - 2N\sigma( \capsilon{\hat{x_{1}}}{\eps}).
\end{eqnarray*}
Using the estimate of a cap measure from Proposition \ref{large caps are small}, we get
\begin{equation}\label{eq:bound-for-A-eps}
\sigma(A_{\eps}) \ge 1 - 2Ne^{-\frac{1}{2}n\eps^{2}}  = 1 -  2e^{\log N -\frac{1}{2}n\eps^{2}}.
\end{equation}
Let $G(\theta,\eps) = \{ x \in \mathbb{R}^{n}: |\langle  x, \theta \rangle| > \eps |x| \}$  be the (open) double cone about the axis $\R \theta$ with angle $\alpha = \arccos(\eps)$. The set $A_\varepsilon$ consists of all points $\theta\in S^{n-1}$ for which the set $\left\{ x_i \right\}_{i=1}^N$ lies in the complement of the double cone $G(\theta, \eps)$, i.e. $\left\{ x_i \right\}_{i=1}^n \subset \R^n\setminus G(\theta,\eps)$. Thus
\begin{equation*}
\theta\in A_\eps \quad \implies \quad
K \subset \left\{ x_i \right\}_{i=1}^N + B_{2}^{n} \subset
(\R^n \setminus G(\theta,\eps)) + B_{2}^{n}.
\end{equation*}
Basic trigonometry shows that the radial function of the unbounded, closed, convex set $T=(\R^n \setminus G(\theta,\eps)) + B_{2}^{n}$ at $\theta$ is given by $\rho_T(\theta)=\frac1{\sqrt{1-\eps^{2}}}$, and by the inclusion above, $\rho_{K}(\theta) \leq \frac{1}{\sqrt{1-\varepsilon^{2}}}$ for all $\theta \in A_{\eps}$. Together with \eqref{eq:bound-for-A-eps}, this completes the proof.
\end{proof}

\begin{rem}
The proof of Proposition \ref{prop: radial function is small} does not rely on convexity of $K$. Indeed, given $\left\{ x_i \right\}_{i=1}^N$, we showed that $\rho_T$ is small on the set $A_\eps$ (which has large measure), and the only assumption on $K$ is that it is covered by $\left\{ x_i \right\}_{i=1}^N + B_{2}^{n}$.
\end{rem}
The next ingredient in the proof is the notion of the Gauss map $n_K$ associated with a convex body $K$. For a smooth convex body $K$, the function $n_{K}: \partial K \to S^{n-1}$ is defined such that $n_K(x)$ is the (unique) outer unit normal to $K$ at $x \in \partial K$.
For a general convex body $K$, the function $n_K : \partial K\to P(S^{n-1})$ is a set valued map, and $n_K(x)\subset S^{n-1}$ is called the normal cone of $K$ at $x$.
Every $x \in \partial K$ can be written uniquely as $x= \rho_{K}(\theta)\theta$ for $\theta = \hat{x}\in S^{n-1}$. Clearly. if $x\in\partial K$ and $u\in n_K(x)$, then $h_{K}(u) = \langle u, x \rangle$ (see e.g. \cite[page 115]{Sch}).
Our next step is using Proposition \ref{prop: radial function is small} to show that for every $\theta$ in the large set $A_{\eps}$,  $\theta$ must be close to every $u\in n_{K}(\rho_{K}(\theta)\theta)$. In other words, the normal cone $n_{K}(\rho_{K}(\theta)\theta)$ at $\rho_K(\theta)\theta\in\partial K$ is contained in a cap around $\theta$, whose size is determined by $\eps$ and by the inradius of $K$.
\begin{prop}\label{gauss map close to id}
Let $K \subset \mathbb{R}^{n}$ be a convex body containing $rB_{2}^{n}$ for some $r \in (0,1]$. Let $\{x_i\}_{i=1}^N$ be centers of a covering of K of size $N = N(K,B_{2}^{n})$, and let $A_\eps$ as in \eqref{def:A-epsilon}. Then for all $\theta \in A_{\eps}$, and all $u\in n_K(\rho_K(\theta)\theta)$, we have
\begin{equation*}
    \langle u, \theta \rangle \geq r\sqrt{1-\eps^{2}}.
\end{equation*}
\end{prop}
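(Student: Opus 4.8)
The plan is to combine the two basic facts we already have about $\rho_K$ on $A_\eps$: an upper bound coming from the covering, and a lower bound coming from the inclusion $rB_2^n\subset K$, and then feed them through the support-function identity for the Gauss map.

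First I would recall that the proof of Proposition \ref{prop: radial function is small} actually shows the pointwise statement $\rho_K(\theta)\le \frac{1}{\sqrt{1-\eps^2}}$ for \emph{every} $\theta\in A_\eps$ (not just that this holds on a set of large measure). Indeed, for $\theta\in A_\eps$ the centers $\{x_i\}$ avoid the open double cone $G(\theta,\eps)$, so $K\subset (\R^n\setminus G(\theta,\eps))+B_2^n=:T$, and the trigonometric computation gives $\rho_K(\theta)\le\rho_T(\theta)=\frac{1}{\sqrt{1-\eps^2}}$.

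Next, fix $\theta\in A_\eps$ and set $x=\rho_K(\theta)\theta\in\partial K$. For any $u\in n_K(\rho_K(\theta)\theta)$ we have the identity $h_K(u)=\langle u,x\rangle=\rho_K(\theta)\langle u,\theta\rangle$. Since $rB_2^n\subset K$ and $|u|=1$, we get $h_K(u)\ge h_{rB_2^n}(u)=r$. Combining,
\[
\langle u,\theta\rangle=\frac{h_K(u)}{\rho_K(\theta)}\ge \frac{r}{\rho_K(\theta)}\ge r\sqrt{1-\eps^2},
\]
using the upper bound on $\rho_K(\theta)$ from the previous step. This is exactly the claimed inequality.

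I do not expect any real obstacle here — the argument is short once one observes that Proposition \ref{prop: radial function is small} yields a pointwise bound on $A_\eps$ and that the support-function identity $h_K(u)=\langle u,\rho_K(\theta)\theta\rangle$ converts a radial bound into an angular bound. The only minor point to be careful about is the degenerate case $\rho_K(\theta)$ near its extremes and the case where some $x_i=0$ (which is excluded in the definition of $A_\eps$, so it causes no trouble).
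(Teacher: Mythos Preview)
Your proposal is correct and follows exactly the paper's own argument: use $rB_2^n\subset K$ to get $h_K(u)\ge r$, combine it with the identity $h_K(u)=\rho_K(\theta)\langle u,\theta\rangle$, and invoke the pointwise bound $\rho_K(\theta)\le 1/\sqrt{1-\eps^2}$ on $A_\eps$ established in the proof of Proposition~\ref{prop: radial function is small}. The only difference is presentational---you spell out the division by $\rho_K(\theta)$ explicitly, whereas the paper writes the chain as a single string of inequalities.
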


\begin{proof}
Fix $\theta \in A_{\eps}$. Note that $r\le h_K$, since $rB_2^n\subset K$. For $u\in n_K(\rho_K(\theta)\theta)$ we get
\begin{align*}
r \le h_{K}(u) =
\langle u, \rho_{K}(\theta)\theta \rangle =
\rho_{K}(\theta) \langle u, \theta \rangle \le
\frac{\langle u , \theta \rangle}{\sqrt{1-\eps^{2}}}&.\qedhere
\end{align*}
\end{proof}
The final ingredient we require is the following upper bound for the measure of a cap, which is sharp for small caps.
\begin{lem}\label{asymptotic measure of a small cap}
Let $u\in S^{n-1}$ and $h \in (0,1)$. Then
\[
\sigma(\capsilon{u}{h})
\le
\frac{(2(1-h))^{\frac{n-1}{2}}}{h\sqrt{n-1}}.
\]
\end{lem}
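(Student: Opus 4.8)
The plan is to express $\sigma(\capsilon{u}{h})$ as a ratio of integrals over the sphere and bound it directly. Recall that for any integrable function on $S^{n-1}$ depending only on the first coordinate $t = \langle u,\theta\rangle$, one has
\[
\sigma(\capsilon{u}{h}) = \frac{\int_h^1 (1-t^2)^{\frac{n-3}{2}}\,dt}{\int_{-1}^1 (1-t^2)^{\frac{n-3}{2}}\,dt}.
\]
So the task splits into two parts: an upper bound for the numerator $I_{\mathrm{num}}=\int_h^1(1-t^2)^{(n-3)/2}\,dt$, and a lower bound for the denominator $I_{\mathrm{den}}=\int_{-1}^1(1-t^2)^{(n-3)/2}\,dt$.

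For the numerator, I would use the pointwise bound $1-t^2 = (1-t)(1+t) \le 2(1-t)$ valid for $t\in[h,1]$, so that $(1-t^2)^{(n-3)/2}\le (2(1-t))^{(n-3)/2}$; integrating from $h$ to $1$ and computing the resulting power integral gives
\[
I_{\mathrm{num}} \le \int_h^1 (2(1-t))^{\frac{n-3}{2}}\,dt = \frac{2^{\frac{n-3}{2}}(1-h)^{\frac{n-1}{2}} \cdot 2}{n-1} = \frac{(2(1-h))^{\frac{n-1}{2}}}{n-1}.
\]
For the denominator I want a lower bound of the form $I_{\mathrm{den}}\ge c/\sqrt{n-1}$ with $c$ large enough that the two estimates combine to the claimed bound; precisely, since the target is $\sigma(\capsilon{u}{h})\le (2(1-h))^{(n-1)/2}/(h\sqrt{n-1})$, it suffices to show $I_{\mathrm{den}} \ge \frac{h\sqrt{n-1}}{n-1} = \frac{h}{\sqrt{n-1}}$, and since $h<1$ it is enough to prove $I_{\mathrm{den}}\ge \frac{1}{\sqrt{n-1}}$. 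This should follow from a standard concentration-type estimate: restrict the integral to $|t|\le 1/\sqrt{n-1}$, where $(1-t^2)^{(n-3)/2}\ge (1-\frac1{n-1})^{(n-3)/2}$, which is bounded below by an absolute constant (e.g. $\ge e^{-1/2}$ or similar), and the interval has length $2/\sqrt{n-1}$, giving $I_{\mathrm{den}}\ge \frac{2}{\sqrt{n-1}}\cdot(\text{const})$. One checks the constant is at least $1/2$, yielding $I_{\mathrm{den}}\ge 1/\sqrt{n-1}$.

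The main obstacle is getting the denominator lower bound clean enough with an explicit absolute constant that survives all the way to the stated inequality without extra slack — the numerator side is essentially exact, so all the room is in the denominator estimate. An alternative, perhaps cleaner, route for the denominator is to recognize $I_{\mathrm{den}} = B\!\left(\frac12,\frac{n-1}{2}\right) = \frac{\sqrt{\pi}\,\Gamma(\frac{n-1}{2})}{\Gamma(\frac n2)}$ and apply Wendel's inequality (or Gautschi's) for ratios of Gamma functions, which gives $\Gamma(\frac{n-1}{2})/\Gamma(\frac n2)\ge 1/\sqrt{n/2}$, hence $I_{\mathrm{den}}\ge \sqrt{\pi}/\sqrt{n/2} = \sqrt{2\pi/n}\ge 1/\sqrt{n-1}$ for $n\ge 2$; this avoids the ad hoc constant-chasing entirely. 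Either way, combining with the numerator bound finishes the proof.
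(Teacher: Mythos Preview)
Your argument is correct (for $n\ge 3$, which is the only case used in the paper; note that for $n=2$ the exponent $(n-3)/2$ is negative and your pointwise bound on the numerator reverses). It is, however, a genuinely different packaging from the paper's proof. The paper parametrises the cap as the graph of $f(x)=\sqrt{1-|x|^2}$ over the equatorial disk $\sqrt{1-h^2}\,B_2^{n-1}$, bounds the surface element $\sqrt{1+|\nabla f|^2}=1/f\le 1/h$ uniformly, and is then left with the ratio $\kappa_{n-1}/\kappa_n$, which it handles via log-convexity of $\Gamma$; the factor $1/h$ in the stated bound is exactly this Jacobian estimate. You instead project to the axis and use the one-dimensional marginal formula $\sigma(\capsilon{u}{h})=\int_h^1(1-t^2)^{(n-3)/2}\,dt\big/\int_{-1}^1(1-t^2)^{(n-3)/2}\,dt$, bounding the numerator pointwise by $1-t^2\le 2(1-t)$ and the denominator via the Beta identity $B(\tfrac12,\tfrac{n-1}{2})$ together with Wendel/Gautschi. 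Both routes ultimately rest on the same Gamma-ratio estimate, but yours is slightly sharper: you obtain $\sigma(\capsilon{u}{h})\le (2(1-h))^{(n-1)/2}/\sqrt{n-1}$ without the $1/h$, and only reintroduce it at the end because the target statement has it. The paper's route has the minor advantage of working verbatim for $n=2$.
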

\begin{proof}
We compute for $u = e_{n}$. Letting $B_{2}^{n-1} := B_{2}^{n} \cap \{x_{n} = 0\}$, and defining $f:B_{2}^{n-1} \to [0,1]$ by $f(x) = \sqrt{1-|x|^2}$, we have $f^{-1}\left([h,1]\right)=\sqrt{1-h^2}B_2^{n-1}$ and
\[
\capsilon{e_n}{h} = \{(x,f(x)): f(x) \ge h \}
= \{(x,f(x)): |x|^2 + h^2 \le 1 \}.
\]
Note that $\grad{f}(x)=\frac{-x}{f(x)}$, and thus $\sqrt{1+|\grad{f}|^2 } =\frac{1}{f(x)}$,
so on the set $\sqrt{1-h^2}B_2^{n-1}$ we have $\sqrt{1+|\grad{f}|^2 }\in \left(1,\frac1h\right)$. We get 
\begin{eqnarray*}
\sigma(\capsilon{e_n}{h})
&=& 
\frac{1}{\vol_{n-1}(S^{n-1})}
\int_{\sqrt{1-h^2} B_2^{n-1}} \sqrt{1+|\grad{f}(x)|^2} dx
\\ \\ &<&
\frac{\vol_{n-1}\left(\sqrt{1-h^2} B_2^{n-1}\right)}{h\cdot\vol_{n-1}(S^{n-1})}
=
\frac{(1-h^2)^{\frac{n-1}{2}}}{h}\cdot
\frac{\vol_{n-1}\left(B_2^{n-1}\right)}{\vol_{n-1}(S^{n-1})}
\\ \\ &=&
\frac{(1-h^2)^{\frac{n-1}{2}}}{nh}\cdot
\frac{\kappa_{n-1}}{\kappa_n}=
\frac{(1-h^2)^{\frac{n-1}{2}}}{nh \sqrt{\pi}}\cdot
\frac{\Gamma\left(\frac{n}{2} + 1\right)}
{\Gamma\left(\frac{n}{2} + \frac12\right)}
\\ \\ &<&
\frac{(1-h^2)^{\frac{n-1}{2}}}{nh \sqrt{\pi}}\cdot
\sqrt{
\frac
{\Gamma\left(\frac{n}{2} + \frac32\right)}
{\Gamma\left(\frac{n}{2} + \frac12\right)}
}=
\frac{(1-h^2)^{\frac{n-1}{2}}}{nh \sqrt{\pi}}
\sqrt{\frac{n+1}{2}}
<
\frac{(1-h^2)^{\frac{n-1}{2}}}{h\sqrt{n-1}},
\end{eqnarray*}
where $\Gamma$ denotes the Gamma function, and we have used the well known identity $\Gamma(x+1)=x\Gamma(x)$ (see e.g. \cite[Chapter 5]{Handbook}). The second inequality holds since $\Gamma$ is log-convex, i.e. $\Gamma\left(x + \frac12\right)\le \sqrt{\Gamma(x)\Gamma(x + 1)}$. Since $1-h^2=(1+h)(1-h)<2(1-h)$, the proof is complete.
\end{proof}
We are now ready to prove Theorem \ref{thm: main theorem}, using Proposition \ref{gauss map close to id}, for a polytope $K = \conv(V)$, and Lemma \ref{asymptotic measure of a small cap}.

\noindent {\bf Proof of Theorem \ref{thm: main theorem}.}
Denote the set of faces of $P$ by $\mathcal{F}$, and let $\mathcal{U} = \{u_{F}: F \in \mathcal{F} \}$ be the set of (outer) unit normals to the facets. For every $\theta$, denote by $F(\theta)$ the facet of $P$ such that $\rho_{P}(\theta)\theta \in F$ (this correspondence is $\sigma$-a.e well defined on $S^{n-1}$). By Proposition \ref{gauss map close to id}, for every $\theta \in A_{\eps}$ we have
\begin{equation}\label{eq:U-is-net-for-A-eps}
    \langle u_{F(\theta)}, \theta \rangle \geq r\sqrt{1-\varepsilon^{2}},
\end{equation}
i.e, the set $\mathcal{U}$ is a net for $A_{\eps}$, meaning that caps at height $r\sqrt{1-\eps^{2}}$ around the normals $\mathcal{U}$ consist a covering of $A_{\eps}$. Using \eqref{eq:bound-for-A-eps} and a union bound we get
\begin{equation*}
1-2e^{\log N-\frac{1}{2}n\eps^{2}} \le
\sigma(A_\eps) \le
\sigma
\left(
\bigcup_{F \in \mathcal{F}}  \capsilon{u_F}{r\sqrt{1-\eps^{2}}} \right) \le
|\mathcal{F}| \cdot \sigma(\capsilon{u}{r\sqrt{1-\eps^{2}}}).
\end{equation*}
By Lemma \ref{asymptotic measure of a small cap} for $h=r\sqrt{1-\eps^2}$, we get
\[
|\mathcal{F}| \ge
\left(1-2e^{\log N-\frac{1}{2}n\eps^{2}}\right)\cdot
\left(
\frac{(2(1-r\sqrt{1-\eps^2}))^{\frac{n-1}{2}}}
{r\sqrt{1-\eps^2}\sqrt{n-1}}
\right)^{-1}.
\]
The assumption $N(K,B_2^n)<e^{\frac{n}{8}}$ implies that $\sqrt{\frac{4\log N}{n}}\in \left(0,\frac1{\sqrt{2}}\right)$. Thus we may choose $\eps=\sqrt{\frac{4\log N}{n}}$, to get $\sqrt{1-\eps^2}\ge \sqrt{\frac12}$. The assumption $3\le N$ implies that
$1-2e^{\log N-\frac{1}{2}n\eps^{2}} =
1 -\frac2N \ge \frac{1}{3}$. Combining both of these bounds with the assumption $\frac{3\sqrt{3}}{\sqrt{n}} \le r$ yields
\[
|\mathcal{F}| \ge
\frac{r\sqrt{n-1}}{3\sqrt{2}}\cdot
\left(
\frac{1}{(2(1-r\sqrt{1-\eps^2}))^{\frac{n-1}{2}}}
\right) \ge
\left(
\frac{1}{(2(1-r\sqrt{1-\eps^2}))}
\right)^{\frac{n-1}{2}},
\]
where the last inequality $\sqrt{\frac{n}{n-1}} \le \sqrt{\frac32}$ holds for all $n\ge 3$.
\qed

\begin{rem}\label{rem:super-exp}
Recall that for $r=1$, the inequality $1-\sqrt{1-\eps} \le \eps$ yields
\[ |\mathcal{F}| \ge
\left( \frac{n}{8\log N}\right)^{\frac{n-1}{2}}.
\]
In fact, when $r$ approaches $1$, we still get a super-exponential lower bound. More precisely, let $P_n \subset \R^n$ be a sequence of polytopes and consider $\varepsilon(n)=\frac{\log N(P_{n},B_{2}^{n})}{n}$. By definition, the covering number is sub-exponential if and only if $\varepsilon(n)=o(1)$. Next consider the sequence of inradii $r(n)=1-\delta(n)$, and assume $\delta(n)=o(1)$, i.e. $r(n)$ tends to $1$. The lower bound of Theorem \ref{thm: main theorem} for the number of facets reads
\[
|\mathcal{F}| > \left( \frac{1}{2
\left(1-(1-\delta)\sqrt{1-4\varepsilon}\right)
} \right)^{\frac{n-1}{2}}\approx
\left( \frac{1}{2\delta + 4\varepsilon} \right)
^{\frac{n-1}{2}},
\]
which is of the order of $\max\{\delta,\varepsilon\}^{-\frac{n-1}{2}}$. In particular, the bound is a super-exponential if and only if $r(n)\to1$ and $\varepsilon(n)\to 0$.
\end{rem}

    
%

\bibliographystyle{amsplain}

\begin{thebibliography}{99}


\bibitem{AGA}
Artstein-Avidan S., Giannopoulos A., Milman V.D.,
{\em Asymptotic Geometric Analysis, Part I}.
Mathematical Surveys and Monographs, volume 202,
American Mathematical Society, Providence, RI. (2015).


\bibitem{AMS}
Artstein-Avidan S., Milman V.D., Szarek S.J.,
{\em Duality of Metric Entropy}. 
Annals of Mathematics, volume 159, 1313--1328 (2004).


\bibitem{MS}
Milman V.D., Szarek S.J.,
{\em A Geometric Lemma and Duality of Entropy Numbers}. 
GAFA Lecture Notes in Mathematics, volume 1745,
Springer, Berlin, Heidelberg (2000).


\bibitem{Handbook}
Olver F.W.J., Lozier D.W., Boisvert R.F., Clark C.W.,
{\em NIST Handbook of Mathematical Functions}.
Cambridge University Press (2010).


\bibitem{Sch}
Schneider R., 
{\em Convex Bodies: the Brunn-Minkowski Theory}.
Encyclopedia of Mathematics and its Applications, volume 44,
Cambridge University Press, Cambridge (1993).

\end{thebibliography}

\end{document}